\renewcommand\eqref[1]{(\ref{#1})} 
\title[Representation formulae for the higher-order Steklov inequalities]{Representation formulae for the higher-order Steklov and $L^{2^{m}}$-Friedrichs inequalities}
\author[T. Ozawa]{Tohru Ozawa}
\address{
	Tohru Ozawa:
	\endgraf
	Department of Applied Physics
	\endgraf
	Waseda University
	\endgraf
	Tokyo 169-8555
	\endgraf
	Japan
	\endgraf
	{\it E-mail address} {\rm txozawa@waseda.jp}
}
\author[D. Suragan]{Durvudkhan Suragan}
\address{
	Durvudkhan Suragan:
	\endgraf
	Department of Mathematics
	\endgraf
 Nazarbayev University
	\endgraf
	53 Kabanbay Batyr Ave, Astana 010000
	\endgraf
	Kazakhstan
	\endgraf
	{\it E-mail address} {\rm durvudkhan.suragan@nu.edu.kz}}
\subjclass[2010]{39B62, 39B99, 22E30.}
\keywords{Steklov inequality, Friedrichs inequality, remainder term,  vector fields}
\thanks{The authors were supported in parts by the Nazarbayev University program 091019CRP2120 and the Nazarbayev University grant 240919FD3901. No new data was collected or generated during the course of this research.}
\newtheoremstyle{theorem}
{10pt}          
{10pt}  
{\sl}  
{\parindent}     
{\bf}  
{. }    
{ }    
{}     
\theoremstyle{theorem}
\numberwithin{equation}{section}
\theoremstyle{plain}
\newtheorem{thm}{Theorem}[section]
\theoremstyle{definition}
\newtheorem{rem}[thm]{Remark}
\newtheoremstyle{defi}
{10pt}          
{10pt}  
{\rm}  
{\parindent}     
{\bf}  
{. }    
{ }    
{}     
\theoremstyle{defi}
\begin{document}
		\begin{abstract}
		In this paper,  we obtain remainder term representation formulae for the higher-order Steklov inequality for vector fields which imply short and direct proofs of the sharp (classical) Steklov inequalities. The obtained results directly imply sharp Steklov type inequalities for some vector fields satisfying H\"ormander's condition, for example. We also give  representation formulae for the $L^{2^{m}}$-Friedrichs inequalities for vector fields.  
	\end{abstract}
	\maketitle
	
\section{Introduction}
In a smooth bounded set $\Omega\subset \mathbb{R}^{n}$, the  Rayleigh quotient for the Laplace operator in $H^{1}_{0}(\Omega)$ is minimized by the ground state with the first eigenvalue $\lambda$ of the minus Dirichlet Laplacian. It directly implies the following (classical) Steklov inequality \cite{Steklov}:
\begin{equation}\label{poincareL2}
\|u\|_{L^{2}(\Omega)} \leq \lambda^{-1/2}\|\nabla u\|_{L^{2}(\Omega)},
\end{equation}
for all  $u\in H^{1}_{0}(\Omega)$.
Furthermore, here the constant $\lambda^{-1/2}$ is sharp.

Let $\Omega\subset \mathbb{R}^{n}$ be a set, bounded at least in one direction. Then there exists a constant $C$, depending only on $\Omega$ and $p$, so that, 
\begin{equation}\label{poincareLp}
\|u\|_{L^{p}(\Omega)} \leq C\|\nabla u\|_{L^{p}(\Omega)},\quad 1<p<\infty,
\end{equation}
for any function $u$ of the Sobolev space $W^{1,p}(\Omega)$ of zero-trace functions.
Inequality \eqref{poincareLp} is called   $L^{p}$-Friedrichs inequality or shortly it can be also called Friedrichs inequality.  

The Steklov inequality is a very important tool proving the existence or/and nonexistence (blow-up) of the solution of partial differential equations and in finite element error estimates. There is a vast number of publications on the Steklov and Friedrichs type inequalities and related subjects (see, e.g. \cite{AW}, \cite{FV}, \cite{KN}, \cite{OS},  and \cite{RS_book} as well as references therein).

The sharp constant in the Steklov inequality is sometimes known as the Steklov constant for $\Omega$. Characterization of the Steklov constant (and its existence) and characterization of nontrivial extremizers (and their existence), in general, very hard tasks that depend upon the value of $p$ and the geometry of $\Omega$. However, if one obtains a representation formula (sharp remainder term formula) for the Steklov inequality, then it follows the proof of the Steklov inequality, 
characterization of the best constant and its existence as well as characterization of nontrivial extremizers and their existence.

Thus, in the present paper, our aim is to obtain representation formulae for the remainder terms for both the higher order version of the Steklov inequality and $L^{2^{m}}$-Friedrichs inequality for general real vector fields. We continue our already started research in this direction \cite{OS19a} and \cite{OS19b}.

As particular cases, the obtained results imply the exact missing term of Steklov inequality \eqref{poincareL2} in the Euclidean case, for example. 

Let $M$ be a smooth $n$-dimension  manifold of a volume form $d\nu$. Let $\{X_{k}\}_{k=1}^{N}$, $N\leq n$, be a family of (smooth) real vector fields on $M$. We denote by $ X$ their gradient 

\begin{equation}\label{EQ:X}
X:=(X_{1},\ldots,X_{N})
\end{equation}
and by $\mathcal L$ corresponding sum of squares operator
\begin{equation}\label{EQ:L}
\mathcal{L}:=\sum_{k=1}^{N}X_{k}^{2}.
\end{equation}
Operators in this form have been much studied and today's literature on the subject is quite large.
For example, it is well-known from H\"ormander's fundamental work \cite{Hor} that if  (the commutators of) the vector fields $\{X_{k}\}_{k=1}^{N}$ generate a Lie algebra, then the sum of squares operator $\mathcal L$ is locally hypoelliptic. Such operators and related estimates have been also studied under weaker (general) conditions or without the hypoellipticity property.

Thus, in this paper,  we obtain remainder terms for the higher order Steklov inequality for the operator $\mathcal{L}$ which imply short and direct proofs of the sharp Steklov inequalities for the (classical) Laplacian. As direct consequences, for instance, the obtained results directly imply sharp Steklov type inequalities for vector fieds satisfying H\"ormander's condition.
We also extend our idea to the $L^{2^{m}}$-Friedrichs inequalities for vector fields. 

Surprisingly, the methods of proofs are just the usual functional analysis arguments by iteration and induction, in addition to few elementary techniques from algebriac relations of vector fields.
The advantage of this method for the sharp remainder term of the Steklov type inequalities is that it allows to treat the case of a general domain with a minimal regularity.

The authors would like to thank Professor Ari Laptev and Professor Grigori Rozenblum for encouragement to complete this paper.

	\section{Representation formula for the higher order Steklov inequality}
	
	\begin{thm}\label{main}
		Let $\Omega \subset M$. Let $\varphi>0$ be a strictly positive eigenfunction of $-\mathcal{L}$ with an eigenvalue $\lambda$, that is, $-\mathcal{L}\varphi= \lambda \varphi$ on $\Omega$. Then for any $u\in C_{0}^{\infty}(\Omega)$
	  we have
\begin{multline}\label{maineven}
\left|X^{2m} u\right|^{2}-\lambda^{2 m} |u|^{2}=\sum_{j=0}^{m-1}\lambda^{2(m-1-j)} \left(\left|\mathcal{L}^{j+1} u+\lambda \mathcal{L}^{j} u\right|^{2}+2 \lambda\left|X \mathcal{L}^{j} u-\frac{X \varphi}{\varphi} \mathcal{L}^{j} u\right|^{2}\right)
\\
+\sum_{j=0}^{m-1}2 \lambda^{2(m-1-j)+1} X\cdot\left(\frac{X \varphi}{\varphi}|\mathcal{L}^{j} u|^{2}-\mathcal{L}^{j} uX\mathcal{L}^{j} u\right),
\end{multline}

where $m=1,2,\ldots,$ and

\begin{multline}\label{mainodd}
\left|X^{2m+1} u\right|^{2}-\lambda^{2 m+1} |u|^{2}=\left|X \mathcal{L}^{m} u-\frac{X \varphi}{\varphi} \mathcal{L}^{m} u\right|^{2}
\\
+\sum_{j=0}^{m-1} \lambda^{2(m-j)-1}\left(\left|\mathcal{L}^{j+1} u+\lambda \mathcal{L}^{j} u\right|^{2}+2 \lambda\left|X \mathcal{L}^{j} u-\frac{X \varphi}{\varphi} \mathcal{L}^{j} u\right|^{2}\right)
\\
+2 \sum_{j=0}^{m-1} \lambda^{2(m-j)} X\cdot\left(\frac{X \varphi}{\varphi}|\mathcal{L}^{j} u|^{2}-\mathcal{L}^{j} uX\mathcal{L}^{j} u\right)+X\cdot\left(\frac{X \varphi}{\varphi}\left(\mathcal{L}^{m} u\right)^{2}\right),
\end{multline}
where $m=0,1,2,\ldots.$
		\end{thm}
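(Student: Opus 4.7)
The plan is to reduce everything to a scalar Picone-type identity and then telescope. The key pointwise identity underlying the theorem, valid for any smooth scalar $v$, is
\begin{equation*}
|Xv|^{2} - \lambda v^{2} = \left|Xv - \tfrac{X\varphi}{\varphi}v\right|^{2} + X\cdot\left(\tfrac{X\varphi}{\varphi}v^{2}\right).
\end{equation*}
I would prove it simply by expanding both sides: the divergence term equals $(\mathcal{L}\varphi/\varphi)v^{2} - |X\varphi/\varphi|^{2}v^{2} + 2(X\varphi\cdot Xv/\varphi)v$, so substituting $-\mathcal{L}\varphi = \lambda\varphi$ and expanding the squared gradient on the right produces the identity after cancellation.

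Next I would establish a one-step recursion. Starting from the elementary expansion
\begin{equation*}
|\mathcal{L}^{j+1}u|^{2} = |\mathcal{L}^{j+1}u + \lambda\mathcal{L}^{j}u|^{2} - 2\lambda\,\mathcal{L}^{j+1}u\,\mathcal{L}^{j}u - \lambda^{2}|\mathcal{L}^{j}u|^{2},
\end{equation*}
I would rewrite the middle term via the product-rule identity $\mathcal{L}w\cdot w = X\cdot(w\,Xw) - |Xw|^{2}$ applied to $w=\mathcal{L}^{j}u$, and then process the resulting $|X\mathcal{L}^{j}u|^{2}$ using the scalar Picone identity above. The two $\lambda^{2}|\mathcal{L}^{j}u|^{2}$ contributions cancel and yield
\begin{multline*}
|\mathcal{L}^{j+1}u|^{2} - \lambda^{2}|\mathcal{L}^{j}u|^{2} = |\mathcal{L}^{j+1}u + \lambda\mathcal{L}^{j}u|^{2} + 2\lambda\left|X\mathcal{L}^{j}u - \tfrac{X\varphi}{\varphi}\mathcal{L}^{j}u\right|^{2} \\
+ 2\lambda\,X\cdot\left(\tfrac{X\varphi}{\varphi}|\mathcal{L}^{j}u|^{2} - \mathcal{L}^{j}u\,X\mathcal{L}^{j}u\right),
\end{multline*}
which is precisely the $j$-th summand of \eqref{maineven}.

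Using $|X^{2m}u|^{2} = |\mathcal{L}^{m}u|^{2}$ together with the telescoping $|\mathcal{L}^{m}u|^{2} - \lambda^{2m}|u|^{2} = \sum_{j=0}^{m-1}\lambda^{2(m-1-j)}(|\mathcal{L}^{j+1}u|^{2}-\lambda^{2}|\mathcal{L}^{j}u|^{2})$, substitution of the one-step recursion gives \eqref{maineven}. For the odd case, $|X^{2m+1}u|^{2}=|X\mathcal{L}^{m}u|^{2}$, and the decomposition
\begin{equation*}
|X\mathcal{L}^{m}u|^{2} - \lambda^{2m+1}|u|^{2} = \bigl(|X\mathcal{L}^{m}u|^{2} - \lambda|\mathcal{L}^{m}u|^{2}\bigr) + \lambda\bigl(|\mathcal{L}^{m}u|^{2} - \lambda^{2m}|u|^{2}\bigr)
\end{equation*}
reduces matters to applying the Picone identity (with $v=\mathcal{L}^{m}u$) to the first bracket and the already proved even formula to the second; matching the powers of $\lambda$ produces \eqref{mainodd}.

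I expect the main obstacle to be purely computational bookkeeping: verifying that the $\lambda^{2}|\mathcal{L}^{j}u|^{2}$ contributions produced by the two independent expansions in the one-step recursion cancel exactly, and that the telescoped indices and powers of $\lambda$ align with the statement. No conceptual ingredient beyond the Picone identity and the product-rule identity for $\mathcal{L}w\cdot w$ is needed, so the proof is essentially algebraic.
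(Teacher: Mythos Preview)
Your proposal is correct and follows essentially the same route as the paper: the paper also derives the Picone identity $|Xv|^{2}-\lambda v^{2}=\bigl|Xv-\tfrac{X\varphi}{\varphi}v\bigr|^{2}+X\cdot\bigl(\tfrac{X\varphi}{\varphi}v^{2}\bigr)$, establishes the one-step identity for $|\mathcal{L}^{j+1}u|^{2}-\lambda^{2}|\mathcal{L}^{j}u|^{2}$ (presented there as the base case $m=1$, starting from $|\mathcal{L}u-\tfrac{\mathcal{L}\varphi}{\varphi}u|^{2}=|\mathcal{L}u+\lambda u|^{2}$), and then iterates. The only cosmetic difference is that the paper writes the iteration as a formal induction $m\Rightarrow m+1$ whereas you phrase it as a telescoping sum; the odd case is handled identically in both by splitting off $|X\mathcal{L}^{m}u|^{2}-\lambda|\mathcal{L}^{m}u|^{2}$ and applying the even formula to the remainder.
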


	Theorem \ref{main} has consequences in several settings when the assumption is satisfied, most notably, on stratified Lie groups, say, on the Heisenberg group, as well as for vector fields on $\mathbb R^{n}$ satisfying the H\"ormander commutator condition of different steps (c.f., e.g. \cite{CLL}, \cite{FLW} and \cite{Ruzhansky-Suragan:JDE}). For example, in the Euclidean case, Theorem \ref{main} directly implies the sharp missing (remainder) term formula for the higher order Steklov inequality, which gives the sharp Steklov inequality for the polyharmonic operator $\Delta^{m}$.

\begin{thm}\label{main2}
	Let $\Omega \subset \mathbb{R}^{n}$ be a connected domain, for which the divergence theorem is true.
	We have the following remainder of the higher order Steklov inequality 
	
	\begin{multline}\label{mainevenLap}
	\int_{\Omega}\left|\nabla^{2m} u\right|^{2}dx-\lambda_{1}^{2 m} \int_{\Omega}|u|^{2}dx\\=\sum_{j=0}^{m-1}\lambda_{1}^{2(m-1-j)} \left(\int_{\Omega}\left|\Delta^{j+1} u+\lambda_{1} \Delta^{j} u\right|^{2}dx+2 \lambda_{1}\int_{\Omega}\left|\nabla \Delta^{j} u-\frac{\nabla u_{1}}{u_{1}} \Delta^{j} u\right|^{2}dx\right)\geq 0,
	\end{multline}
	where $m=1,2,\dots,$ and
	\begin{multline}\label{mainoddLap}
	\int_{\Omega}\left|\nabla^{2m+1} u\right|^{2}dx-\lambda_{1}^{2 m+1} 	\int_{\Omega}|u|^{2}dx=	\int_{\Omega}\left|\nabla \Delta^{m} u-\frac{\nabla u_{1}}{u_{1}} \Delta^{m} u\right|^{2}dx
	\\
	+\sum_{j=0}^{m-1} \lambda_{1}^{2(m-j)-1}\left(	\int_{\Omega}\left|\Delta^{j+1} u+\lambda_{1} \Delta^{j} u\right|^{2}dx+2 \lambda_{1}	\int_{\Omega}\left|\nabla \Delta^{j} u-\frac{\nabla u_{1}}{u_{1}} \Delta^{j} u\right|^{2}dx\right)\geq 0,
	\end{multline}
		where $m=0,1,\dots,$ for all $u\in C_{0}^{\infty}(\Omega).$ Here $u_{1}$ is the ground state of the (minus) Dirichlet Laplacian in $\Omega$ and $\lambda_{1}$ is the corresponding eigenvalue. The equality cases hold if and only if $u$ is proportional to $u_{1}$.
\end{thm}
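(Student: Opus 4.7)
The plan is to derive Theorem \ref{main2} as the Euclidean specialization of Theorem \ref{main}. I would set $M=\mathbb{R}^{n}$, choose $X_{k}=\partial_{k}$ so that $X=\nabla$ and $\mathcal{L}=\Delta$, and take $\varphi=u_{1}$, $\lambda=\lambda_{1}$, where $u_{1}>0$ in $\Omega$ is the Dirichlet ground state. Substituting these choices into the pointwise identities \eqref{maineven} and \eqref{mainodd} produces exactly the integrands on the right-hand sides of \eqref{mainevenLap} and \eqref{mainoddLap}, together with the divergence terms $\nabla\cdot\bigl(\frac{\nabla u_{1}}{u_{1}}|\Delta^{j}u|^{2}-\Delta^{j}u\,\nabla\Delta^{j}u\bigr)$.

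The second step is to integrate both sides over $\Omega$. Since $u\in C_{0}^{\infty}(\Omega)$, each vector field appearing under a divergence sign is smooth and compactly supported inside $\{u_{1}>0\}\cap\Omega$, so the divergence theorem (assumed to hold on $\Omega$) kills those contributions. On the left-hand side I need the identifications $\int_{\Omega}|\mathcal{L}^{m}u|^{2}dx=\int_{\Omega}|\nabla^{2m}u|^{2}dx$ and $\int_{\Omega}|X\mathcal{L}^{m}u|^{2}dx=\int_{\Omega}|\nabla^{2m+1}u|^{2}dx$. These follow from the standard fact that $\int(\partial_{i}\partial_{j}u)^{2}\,dx=\int\partial_{i}^{2}u\,\partial_{j}^{2}u\,dx$ for $u\in C_{0}^{\infty}$, obtained by a double integration by parts and then summing over $i,j$; iterating this identity up to order $2m$ (or $2m+1$) gives the required matching of the left-hand sides. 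All surviving terms on the right-hand side are then squared $L^{2}$-norms, which yields non-negativity of the remainder and hence the inequality.

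The subtle point will be the equality analysis. If equality holds in \eqref{mainevenLap}, then every squared summand on the right must vanish identically in $\Omega$. The vanishing of $\nabla\Delta^{j}u-\frac{\nabla u_{1}}{u_{1}}\Delta^{j}u$ rewrites as $u_{1}\nabla(\Delta^{j}u/u_{1})=0$, so on the connected open set $\{u_{1}>0\}$ the ratio $\Delta^{j}u/u_{1}$ is a constant $c_{j}$. Coupled with $\Delta^{j+1}u+\lambda_{1}\Delta^{j}u=0$, a finite downward induction on $j$ (starting from $j=m-1$ and working back to $j=0$) propagates this pointwise proportionality all the way down to $u=cu_{1}$. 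The main obstacle will be reconciling this conclusion with the a priori requirement $u\in C_{0}^{\infty}(\Omega)$, since $u_{1}\notin C_{0}^{\infty}(\Omega)$; the equality characterization is to be read in the natural closure (for example, the appropriate Sobolev space obtained by completing $C_{0}^{\infty}(\Omega)$ in the relevant norm), and verifying that both the pointwise identities and the divergence-theorem step extend to that closure by density is where I would expect the real care to be needed.
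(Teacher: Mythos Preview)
Your approach is essentially the same as the paper's: specialize Theorem~\ref{main} to $X=\nabla$, $\mathcal{L}=\Delta$, $\varphi=u_{1}$, $\lambda=\lambda_{1}$, integrate over $\Omega$, kill the divergence terms, and read off equality from the vanishing of the squared summands via $\nabla(\Delta^{j}u/u_{1})=0$. Your additional remarks on the $\int|\Delta^{m}u|^{2}=\int|\nabla^{2m}u|^{2}$ identification and on the $C_{0}^{\infty}$ versus Sobolev-closure issue for the equality case are in fact more careful than what the paper records, but they do not change the line of argument.
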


\begin{proof}[Proof of Theorem \ref{main2}]
	In connected domains, it is known that the (minus) Dirichlet Laplacian satisfies the so-called Beurling-Deny condition \cite[Appendix 1 to Section XIII.12]{ReedSimon}. That is, the semigroup generated by the operator is positive. Therefore, in Theorem \ref{main} we can set $\lambda=\lambda_{1}>0$ and $\varphi=u_{1}>0$ in $\Omega$.
	We also have 
	$\mathcal L=\Delta$ and $X=\nabla$ in the Euclidean case. Thus, by integrating over $\Omega$ both sides of \eqref{maineven} and \eqref{mainodd} and using the divergence theorem as well as the fact that $u$ vanishes on the boundary of $\Omega$, we obtain \eqref{mainevenLap} and \eqref{mainoddLap}.
	Since $\lambda_{1}>0$ we have the equality case in \eqref{mainevenLap} if and only if 
	$$0=\left|\Delta^{j+1} u+\lambda_{1} \Delta^{j} u\right|^{2},\, j=0,\ldots, m-1,$$
	and
	$$0=\left|\nabla \Delta^{j} u-\frac{\nabla u_{1}}{u_{1}} \Delta^{j} u\right|^{2}=\left|\nabla  \left( \frac{\Delta^{j} u}{u_{1}}\right)\right|^{2}u_{1}^{2},\, j=0,\ldots, m-1,$$ that is, $\frac{u}{u_{1}}=const$.
	This means the equality case in \eqref{mainevenLap} holds if and only if 
	$u$ is proportial to $u_{1}$. 
	
	Similarly, since $\lambda_{1}>0$ we have the equality case  in \eqref{mainoddLap} if and only if 
	$$0=\left|\Delta^{j+1} u+\lambda_{1} \Delta^{j} u\right|^{2},\quad j=0,\ldots, m-1,$$
	and
	$$0=\left|\nabla \Delta^{j} u-\frac{\nabla u_{1}}{u_{1}} \Delta^{j} u\right|^{2}=\left|\nabla  \left( \frac{\Delta^{j} u}{u_{1}}\right)\right|^{2}u_{1}^{2},\, j=0,\ldots, m,$$ that is, $\frac{u}{u_{1}}=const$.
	This means the equality case in \eqref{mainoddLap} also holds if and only if 
	$u$ is proportial to $u_{1}$. 
	It completes the proof. 
\end{proof}

		\begin{proof}[Proof of Theorem \ref{main}]
		
To prove equality \eqref{maineven}. First let us check its validity for $m=1$. A straightforward calculation gives

\begin{multline}\label{m=2}
\left|\mathcal{L} u-\frac{\mathcal{L} \varphi}{\varphi} u\right|^{2}=|\mathcal{L} u|^{2}-2 \frac{\mathcal{L} \varphi}{\varphi} u \mathcal{L} u+\left(\frac{\mathcal{L} \varphi}{\varphi}\right)^{2} |u|^{2}
\\
=|\mathcal{L} u|^{2}-\frac{\mathcal{L} \varphi}{\varphi}\left(\mathcal{L}|u|^{2}-2|X u|^{2}\right)+\left(\frac{\mathcal{L} \varphi}{\varphi}\right)^{2} |u|^{2}.
\end{multline}
On the other hand, we have 
$$
2 \frac{\mathcal{L} \varphi}{\varphi}|X u|^{2}=2 \frac{\mathcal{L} \varphi}{\varphi}\left(-\frac{\mathcal{L} \varphi}{\varphi} |u|^{2}+\left|X u-\frac{X \varphi}{\varphi} u\right|^{2}+X\cdot\left(\frac{X \varphi}{\varphi} |u|^{2}\right)\right).
$$
Combining this with \eqref{m=2} and using the assumption $-\mathcal{L}\varphi= \lambda \varphi$, we obtain
$$
\left|\mathcal{L} u-\frac{\mathcal{L} \varphi}{\varphi} u\right|^{2}
$$

$$
=|\mathcal{L} u|^{2}-\left(\frac{\mathcal{L} \varphi}{\varphi}\right)^{2} |u|^{2}+2 \frac{\mathcal{L} \varphi}{\varphi}\left(\left|X u-\frac{X \varphi}{\varphi} u\right|^{2}+X\cdot\left(\frac{X \varphi}{\varphi} |u|^{2}\right)\right)-\frac{\mathcal{L} \varphi}{\varphi}\mathcal{L}|u|^{2}
$$

$$
=|\mathcal{L} u|^{2}-\lambda^{2} |u|^{2}-2 \lambda\left(\left|X u-\frac{X \varphi}{\varphi} u\right|^{2}+X\cdot\left(\frac{X \varphi}{\varphi} |u|^{2}\right)\right)+\lambda\mathcal{L}|u|^{2}.
$$
Now using the identity 
$$
\lambda \mathcal{L}|u|^{2}=2 \lambda X\cdot\left(uXu\right),
$$
we get
$$|\mathcal{L} u|^{2}-\lambda^{2} |u|^{2}=\left|\mathcal{L} u+ \lambda u\right|^{2}+2 \lambda\left(\left|X u-\frac{X \varphi}{\varphi} u\right|^{2}+X\cdot\left(\frac{X \varphi}{\varphi} |u|^{2}-uXu\right)\right).$$
Now we need to complete the inductive step
$m \Rightarrow m+1.$ Thus, we compute 

$$
\left|\mathcal{L}^{m +1} u\right|^{2}-\lambda^{2(m+1)} |u|^{2}
$$

$$
=\left|\mathcal{L} \mathcal{L}^{m} u\right|^{2}-\lambda^{2}\left|\mathcal{L}^{m} u\right|^{2}+\lambda^{2}\left(\left|\mathcal{L}^{m} u\right|^{2}-\lambda^{2 m} |u|^{2}\right)
$$

$$
=\left|\mathcal{L}^{m+1} u+\lambda \mathcal{L}^{m} u\right|^{2}+2 \lambda\left|X \mathcal{L}^{m} u-\frac{X \varphi}{\varphi} \mathcal{L}^{m} u\right|^{2}
$$

$$
+2 \lambda X\cdot\left(\frac{X \varphi}{\varphi}|\mathcal{L}^{m} u|^{2}-\mathcal{L}^{m}u \,X\mathcal{L}^{m} u\right)
$$

$$
+\sum_{j=0}^{m-1} \lambda^{2(m-j)}\left(\left|\mathcal{L}^{j+1} u+\lambda \mathcal{L}^{j} u\right|^{2}+2 \lambda\left|X \mathcal{L}^{j} u-\frac{X \varphi}{\varphi} \mathcal{L}^{j} u\right|^{2}\right)
$$

$$
+2 \sum_{j=0}^{m-1} \lambda^{2(m-j)+1} X\cdot\left(\frac{X \varphi}{\varphi}|\mathcal{L}^{j} u|^{2}-\mathcal{L}^{j} uX\mathcal{L}^{j} u\right)
$$

$$
=\sum_{j=0}^{m} \lambda^{2(m-j)}\left(\left|\mathcal{L}^{j+1} u+\lambda \mathcal{L}^{j} u\right|^{2}+2 \lambda\left|X \mathcal{L}^{j} u-\frac{X \varphi}{\varphi} \mathcal{L}^{j} u\right|^{2}\right)
$$

$$
+2 \sum_{j=0}^{m} \lambda^{2(m-j)+1}  X\cdot\left(\frac{X \varphi}{\varphi}|\mathcal{L}^{j} u|^{2}-\mathcal{L}^{j} uX\mathcal{L}^{j} u\right).
$$
It proves formula \eqref{maineven}.

In order to prove relation \eqref{mainodd}, we provide the following direct computation
\begin{equation}
\label{eq1}
\left|X u-\frac{X \varphi}{\varphi} u \right|^{2}=|X u|^{2}-2\frac{X \varphi}{\varphi} u X u+\frac{|X \varphi|^{2}}{\varphi^{2}}|u|^{2}=
|X u|^{2}-\frac{X \varphi}{\varphi}  X |u|^{2}+\frac{|X \varphi|^{2}}{\varphi^{2}}|u|^{2}.		\end{equation}
We also have
\begin{equation}
\label{eq2} -\frac{X \varphi}{\varphi}  X |u|^{2}
=-X\cdot \left(\frac{X \varphi}{\varphi} |u|^{2} \right) +\frac{\mathcal{L} \varphi}{\varphi}|u|^{2}-\frac{|X \varphi|^{2}}{\varphi^{2}}|u|^{2}.	\end{equation}
Combining \eqref{eq1} and \eqref{eq2}, we obtain
\begin{equation}
\label{eq3}
|X u|^{2}-\lambda |u|^{2}=\left|X u-\frac{X \varphi}{\varphi} u\right|^{2}+X\cdot\left(\frac{X \varphi}{\varphi} |u|^{2}\right).
\end{equation}
That is, it verifies 
\eqref{mainodd} when $m=0.$

Now we apply the scaling 
$$
u \mapsto \mathcal{L}^{m} u
$$
to \eqref{eq3}. Thus, we get 
\begin{equation}
\label{eq4}
\left|X \mathcal{L}^{m} u\right|^{2}=\left|X \mathcal{L}^{m} u-\frac{X \varphi}{\varphi} \mathcal{L}^{m} u\right|^{2}+X\cdot\left(\frac{X \varphi}{\varphi}\left(\mathcal{L}^{m} u\right)^{2}\right)+\lambda\left(\mathcal{L}^{m} u\right)^{2}.
\end{equation}

Adding $-\lambda^{2 m+1} |u|^{2}$ to both sides of  \eqref{eq4} and using formula \eqref{maineven}, we arrive at 
$$
\left|X \mathcal{L}^{m} u\right|^{2}-\lambda^{2 m+1} |u|^{2}
$$

$$
=\left|X \mathcal{L}^{m} u-\frac{X \varphi}{\varphi} \mathcal{L}^{m} u\right|^{2}+X\cdot\left(\frac{X \varphi}{\varphi}\left(\mathcal{L}^{m} u\right)^{2}\right)
$$

$$
+\lambda\left(\left(\mathcal{L}^{m} u\right)^{2}-\lambda^{2 m} |u|^{2}\right)
$$

$$
=\left|X \mathcal{L}^{m} u-\frac{X \varphi}{\varphi} \mathcal{L}^{m} u\right|^{2}+X\cdot\left(\frac{X \varphi}{\varphi}\left(\mathcal{L}^{m} u\right)^{2}\right)
$$

$$
+\sum_{j=0}^{m-1} \lambda^{2(m-j)-1}\left(\left|\mathcal{L}^{j+1} u+\lambda \mathcal{L}^{j} u\right|^{2}+2 \lambda\left|X \mathcal{L}^{j} u-\frac{X \varphi}{\varphi} \mathcal{L}^{j} u\right|^{2}\right)
$$

$$
+2 \sum_{j=0}^{m-1} \lambda^{2(m-j)}  X\cdot\left(\frac{X \varphi}{\varphi}|\mathcal{L}^{j} u|^{2}-\mathcal{L}^{j} uX\mathcal{L}^{j} u\right).
$$
It completes the proof. 

	\end{proof}

	\section{Representation formula for the  $L^{2^{m}}$-Friedrichs inequality}
Note that here  $u\in C^{1}(\Omega)$ means that $Xu\in C(\Omega)$. 
	\begin{thm}\label{mainL2m} Let $\Omega\subset M$. Let $m$ be a nonnegative integer.
		For all $u\in C^{1}(\Omega)$ and $\varphi\in C^{2}(\Omega)$ with $\varphi>0$, we have 
		\begin{multline}\label{poincareLpm}
		|X u|^{p_{m}}+\left(\frac{\mathcal{L} \varphi}{\varphi}+\sigma_{m}\right) u^{p_{m}}
		=\sum_{j=1}^{m-1}\left|| X\left(u^{p_{m-j-1}}\right)|^{p_{j}}-2^{p_{j}-1} u^{p_{m-1}}\right|^{2}
	\\
		+\left|X\left(u^{p_{m-1}}\right)-\frac{X \varphi}{\varphi} u^{p_{m-1}}\right|^{2}+X\cdot\left(\frac{X \varphi}{\varphi} u^{p_{m}}\right).
		\end{multline}
Here $p_{m}=2^{m},\, m \geqslant 0,$
	and	
$\sigma_{m}=\frac{1}{4} \sum_{j=1}^{m-1} 4^{p_{j}},\, m \geqslant 1.$
\end{thm}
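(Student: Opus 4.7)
The plan is to establish \eqref{poincareLpm} by induction on $m\geq 1$. The base case $m=1$ reduces, since $\sigma_{1}=0$ and the sum on the right-hand side is empty, to
$$|Xu|^{2}+\frac{\mathcal{L}\varphi}{\varphi}u^{2}=\left|Xu-\frac{X\varphi}{\varphi}u\right|^{2}+X\cdot\left(\frac{X\varphi}{\varphi}u^{2}\right),$$
which is precisely the pointwise identity \eqref{eq3} derived in the proof of Theorem \ref{main}. That derivation rests only on Leibniz's rule for $X$ acting on a product and does not invoke the eigenfunction hypothesis, so it is available here for an arbitrary strictly positive $\varphi\in C^{2}(\Omega)$.

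For the inductive step $m\Rightarrow m+1$, I would apply the level-$m$ identity to the function $u^{2}\in C^{1}(\Omega)$. Because $2p_{k}=p_{k+1}$ for every $k\geq 0$, this substitution shifts the exponents $p_{m-j-1}$, $p_{m-1}$, $p_{m}$ appearing in \eqref{poincareLpm} to $p_{m-j}$, $p_{m}$, $p_{m+1}$ respectively; in particular, the resulting sum reindexes to precisely the range $j=1,\dots,m-1$ of the target identity at level $m+1$. On the left-hand side, $|X(u^{2})|^{p_{m}}=|2u\,Xu|^{p_{m}}=2^{p_{m}}u^{p_{m}}|Xu|^{p_{m}}$, since $p_{m}=2^{m}$ is even so $|u|^{p_{m}}=u^{p_{m}}$.

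To supply the missing $j=m$ summand of the target, I would use the elementary expansion
$$\bigl||Xu|^{p_{m}}-2^{p_{m}-1}u^{p_{m}}\bigr|^{2}=|Xu|^{p_{m+1}}-2^{p_{m}}u^{p_{m}}|Xu|^{p_{m}}+\tfrac{1}{4}\cdot 4^{p_{m}}u^{p_{m+1}}.$$
Adding this to both sides of the substituted identity cancels the cross term $2^{p_{m}}u^{p_{m}}|Xu|^{p_{m}}$ on the left, leaving $|Xu|^{p_{m+1}}$; on the right it produces exactly the $j=m$ summand of the target identity; and the leftover $\tfrac{1}{4}\cdot 4^{p_{m}}u^{p_{m+1}}$ is absorbed into the coefficient of $u^{p_{m+1}}$ via the one-step recursion $\sigma_{m+1}=\sigma_{m}+\tfrac{1}{4}\cdot 4^{p_{m}}$, which is immediate from the definition of $\sigma_{m}$. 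The remaining square $\bigl|X(u^{p_{m}})-\tfrac{X\varphi}{\varphi}u^{p_{m}}\bigr|^{2}$ and the divergence term $X\cdot\bigl(\tfrac{X\varphi}{\varphi}u^{p_{m+1}}\bigr)$ are produced directly by the substitution, so the identity closes at level $m+1$.

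The argument is entirely algebraic bookkeeping; the main obstacle I anticipate is simply keeping the doubling relation $2p_{k}=p_{k+1}$ and the expansion of the $j=m$ square consistently aligned with the coefficients $2^{p_{m}-1}$ and $\tfrac14\cdot 4^{p_{m}}$. No regularity beyond $u\in C^{1}(\Omega)$ is needed at any step, since only first-order derivatives of $u$ appear after every Leibniz expansion, and the substitution $u\mapsto u^{2}$ preserves this.
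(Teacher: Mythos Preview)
Your proposal is correct and follows essentially the same route as the paper: the base case $m=1$ is the identity obtained by combining \eqref{eq1} and \eqref{eq2} (indeed without the eigenfunction hypothesis), and the inductive step substitutes $u\mapsto u^{2}$ in the level-$m$ identity and adds the quadratic expansion of $\bigl||Xu|^{p_{m}}-2^{p_{m}-1}u^{p_{m}}\bigr|^{2}$ to produce the missing $j=m$ summand, with $\sigma_{m+1}=\sigma_{m}+\tfrac{1}{4}\,4^{p_{m}}$ closing the coefficient. The only cosmetic differences are that the paper writes $|u|^{2}$ rather than $u^{2}$ and displays the cases $m=2,3$ before the general induction.
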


	\begin{rem}
We have 
$$
4^{p_{m-1}-1} \leqslant \sigma_{m} \leqslant \frac{m-1}{4}\, 4^{p_{m-1}},
$$
that is, 
$$
4^{\frac{p_{m-1}-1}{p_{m}}} \leqslant \sigma_{m}^{\frac{1}{p_{m}}}\leqslant\left(\frac{m-1}{4}\right)^{\frac{1}{p_{m}}}  4^{\frac{p_{m-1}}{p_{m}}}.
$$
It follows the following asymtotics on $
\sigma_{m}$:
$$
\lim _{m \rightarrow \infty} \sigma_{m}^{\frac{1}{p_{m}}}=2.
$$
	\end{rem}

\begin{thm}\label{mainLp2}
	Let $\Omega \subset \mathbb{R}^{n}$ be a connected domain, for which the divergence theorem is true. 
	We have the remainder of the $L^{2^{m}}$-Friedrichs inequality 
	
	\begin{multline}\label{mainLpLap}
	\int_{\Omega}|\nabla u|^{p_{m}}dx-\left(\lambda_{1}-\sigma_{m}\right) \int_{\Omega}|u|^{p_{m}}dx\\=\sum_{j=1}^{m-1} \int_{\Omega}\left|| \nabla\left(u^{p_{m-j-1}}\right)|^{p_{j}}-2^{p_{j}-1} u^{p_{m-1}}\right|^{2}dx+\int_{\Omega}\left|\nabla\left(u^{p_{m-1}}\right)-\frac{\nabla u_{1}}{u_{1}} u^{p_{m-1}}\right|^{2}dx\geq 0
	\end{multline}
	for all $u\in C_{0}^{1}(\Omega).$ Here $\sigma_{m}=\frac{1}{4} \sum_{j=1}^{m-1} 4^{p_{j}},\, m \in \mathbb{N},\, p_{j}=2^{j},$ $u_{1}$ is the ground state of the minus Laplacian in $\Omega$ and $\lambda_{1}$ is the corresponding eigenvalue. 
\end{thm}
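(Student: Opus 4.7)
The plan is to mirror the specialization argument used to derive Theorem \ref{main2} from Theorem \ref{main}. I would first invoke the Beurling-Deny condition from \cite[Appendix 1 to Section XIII.12]{ReedSimon}, exactly as in the proof of Theorem \ref{main2}, to ensure that in the connected domain $\Omega$ the Dirichlet ground state $u_1$ of $-\Delta$ is strictly positive throughout $\Omega$ and that the associated eigenvalue $\lambda_1$ is strictly positive. This makes $\varphi = u_1$ an admissible choice of positive eigenfunction in the pointwise identity of Theorem \ref{mainL2m}.

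Next, I would specialize \eqref{poincareLpm} by taking $X = \nabla$, $\mathcal{L} = \Delta$, $\varphi = u_1$, and noting that $\frac{\mathcal{L}\varphi}{\varphi} = \frac{\Delta u_1}{u_1} = -\lambda_1$. The left-hand side of \eqref{poincareLpm} then becomes $|\nabla u|^{p_m} - (\lambda_1 - \sigma_m)\,u^{p_m}$, matching the integrand on the left of \eqref{mainLpLap} (using that $p_m = 2^m$ is even for $m\ge 1$, so $u^{p_m} = |u|^{p_m}$). The right-hand side of \eqref{poincareLpm} is a sum of squared terms plus the single divergence $\nabla \cdot \bigl(\tfrac{\nabla u_1}{u_1}\,u^{p_m}\bigr)$.

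Integrating over $\Omega$ and applying the divergence theorem to dispose of that flux term would yield the stated identity \eqref{mainLpLap}; the inequality $\ge 0$ is automatic because the remaining terms on the right are $L^2$-norms of real integrands.

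The only subtle point I anticipate is the vanishing of the boundary contribution $\int_\Omega \nabla \cdot \bigl(\tfrac{\nabla u_1}{u_1}\,u^{p_m}\bigr)\,dx$. A priori, $\tfrac{\nabla u_1}{u_1}$ can blow up near $\partial \Omega$ because $u_1$ vanishes there. This is handled by the hypothesis $u \in C_0^1(\Omega)$, which gives $u$ compact support $K \subset \Omega$; on $K$ the function $u_1$ is bounded below by a positive constant, so the integrand is continuous and compactly supported in $\Omega$. The divergence theorem then applies in its elementary form and the flux vanishes. Aside from this minor regularity check, every step is a direct substitution followed by integration, so no deeper obstacle is expected.
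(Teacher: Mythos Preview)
Your proposal is correct and follows essentially the same route as the paper: specialize Theorem \ref{mainL2m} with $X=\nabla$, $\varphi=u_{1}$, use $\Delta u_{1}/u_{1}=-\lambda_{1}$, integrate over $\Omega$, and kill the divergence term via the divergence theorem. Your added remark that compact support of $u$ handles the potential blow-up of $\nabla u_{1}/u_{1}$ near $\partial\Omega$ is a welcome detail the paper leaves implicit.
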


Of course, the difference $\lambda_{1}-\sigma_{m}$ can be negative for some domains.  It means that \eqref{mainLpLap} does not imply the $L^{2^{m}}$-Friedrichs inequality, in general. 

Note that, as usual, when $m=1$ the term with the sigma notation in \eqref{mainLpLap}  disappears since the lower index is greater than the upper one.
\begin{proof}[Proof of Theorem \ref{mainLp2}]
As in the proof of Theorem \ref{main2}, in Theorem \ref{mainL2m} we set $\lambda=\lambda_{1}>0$ and $\varphi=u_{1}>0$ in $\Omega$, and $X=\nabla$. Thus, by integrating over $\Omega$ both sides of \eqref{mainL2m}  and using the divergence theorem as well as the fact that $u$ vanishes on the boundary of $\Omega$, we arrive at the desired result. 
\end{proof}

	\begin{proof}[Proof of Theorem \ref{mainL2m}]
When $m=1$, we have $p_{1}=2$, $\sigma_{1}=0$,
and

$$
|X u|^{2}+\frac{\mathcal{L} \varphi}{\varphi} u^{2}=\left|X u-\frac{X \varphi}{\varphi} u\right|^{2}+X\cdot\left(\frac{X \varphi}{\varphi} u\right).
$$
When $m=2$, we have $p_{2}=4$, $\sigma_{2}=\frac{1}{4}\, 4^{p_{1}}=4$, and
$$
|X u|^{4}+\left(\frac{\mathcal{L} \varphi}{\varphi}+4\right) u^{4}=\left.|| X u\right|^{2}-\left.2 |u|^{2}\right|^{2}+
$$
$$
+\left|X\left(|u|^{2}\right)-\frac{X \varphi}{\varphi} |u|^{2}\right|^{2}+X\cdot\left(\frac{X \varphi}{\varphi} u^{4}\right).
$$
When $m=3$, we have $p_{3}=8$, $\sigma_{3}=\sigma_{2}+\frac{1}{4}\, 4^{p_{2}}=4+4^{3}=68$, and 
$$
|X u|^{8}+\left(\frac{\mathcal{L} \varphi}{\varphi}+68\right) u^{8}=\left|| X u |^{4}-8 u^{4}\right|^{2}+\left|| X\left(|u|^{2}\right)|^{2}-2 u^{4}\right|^{2}
$$

$$
+\left|X\left(u^{4}\right)-\frac{X \varphi}{\varphi} u^{4}\right|^{2}+X\cdot\left(\frac{X \varphi}{\varphi} u^{8}\right).
$$
To complete the induction  we start with

$$\left|| X u |^{p_{m}}-2^{p_{m}-1} u^{p_{m}}\right|^{2}
=|X u|^{2 p_{m}}-2^{p_{m}} u^{p_{m}}|X u|^{p_{m}}+2^{2 p_{m}-2}  u^{2 p_{m}}
$$
$$
=|X u|^{p_{m+1}}-2^{p_{m}} u^{p_{m}}|X u|^{p_{m}}+\frac{1}{4}\, 4^{p_{m}} u^{p_{m+1}}.
$$
We replace \(u\) by \(|u|^{2}\) in \eqref{poincareLpm} to obtain

$$
\left|X\left(|u|^{2}\right)\right|^{p_{m}}+\left(\frac{\mathcal{L} \varphi}{\varphi}+\sigma_{m}\right)\left(|u|^{2}\right)^{p_{m}}
$$

$$
=\sum_{j=1}^{m-1} \left|\left|X\left(|u|^{2}\right)^{p_{m-j-1}}\right|^{p_{j}}-2^{p_{j}-1}\left(|u|^{2}\right)^{p_{m-1}}\right|^{2}
$$

$$
+\left|X\left(|u|^{2}\right)^{p_{m-1}} -\frac{X \varphi}{\varphi}\left(|u|^{2}\right)^{p_{m-1}}\right|^{2}+X\cdot\left(\frac{X \varphi}{\varphi}\left(|u|^{2}\right)^{p_{m}}\right),
$$
where 

$$
\left|X\left(|u|^{2}\right)\right|^{p_{m}}+\left(\frac{\mathcal{L} \varphi}{\varphi}+\sigma_{m}\right)\left(|u|^{2}\right)^{p_{m}}=2^{p_{m}} u^{p_{m}}\left|X u \right|^{p_{m}}+\left(\frac{\mathcal{L} \varphi}{\varphi}+\sigma_{m}\right) u^{p_{m+1}}
$$
and 
$$
\sum_{j=1}^{m-1} \left|\left|X\left(|u|^{2}\right)^{p_{m-j-1}}\right|^{p_{j}}-2^{p_{j}-1}\left(|u|^{2}\right)^{p_{m-1}}\right|^{2}
$$

$$
+\left|X\left(|u|^{2}\right)^{p_{m-1}} -\frac{X \varphi}{\varphi}\left(|u|^{2}\right)^{p_{m-1}}\right|^{2}+X\cdot\left(\frac{X \varphi}{\varphi}\left(|u|^{2}\right)^{p_{m}}\right)
$$
$$
=\sum_{j=1}^{m-1} \left| |X\left(u^{p_{m-j}}\right)|^{p_{j}}-2^{p_{j}-1} u^{p_{m}}\right|^{2}
$$

$$
+\left|X\left(u^{p_{m}}\right)-\frac{X \varphi}{\varphi} u^{p_{m}}\right|^{2}+X\cdot\left(\frac{X \varphi}{\varphi} u^{p_{m+1}}\right).
$$
Therefore, we have
$$|X u|^{p_{m+1}}
=2^{p_{m}} u^{p_{m}}\left|X u\right|^{p_{m}}+\left|| X u|^{p_{m}}-2^{p_{m}-1} u^{p_{m}}\right|^{2}-\frac{1}{4}\, 4^{p_{m}} u^{p_{m+1}}
$$
$$
=-\left(\frac{\mathcal{L} \varphi}{\varphi}+\sigma_{m}+\frac{1}{4}\, 4^{p_{m}}\right) u^{p_{m+1}}+\left| |X u|^{p_{m}}-2^{p_{m}-1} u^{p_{m}}\right|^{2}
$$
$$
+\sum_{j=1}^{m-1} \left| |X\left(u^{p_{m-j}}\right)|^{p_{j}}-2^{p_{j}-1} u^{p_{m}}\right|^{2}
$$

$$
+\left|X\left(u^{p_{m}}\right)-\frac{X \varphi}{\varphi} u^{p_{m}}\right|^{2}+X\cdot\left(\frac{X \varphi}{\varphi} u^{p_{m+1}}\right).
$$
With $$
\sigma_{m}+\frac{1}{4}\, 4^{p_{m}}=\sigma_{m+1},
$$
it completes the induction. 
	\end{proof}


\begin{thebibliography}{NZW01}



	\bibitem{AW}
Avkhadiev F. G. and  Wirths K. J.,
\newblock Unified Poincar\'e and Hardy inequalities with sharp constants for convex domains. {\it ZAMM},  87(8) (2007), 6--32. 


\bibitem{CLL}
 Cohn W. S.,  Lu G., and Lu S.,
\newblock Higher order Poincar\'e inequalities associated with linear operators on stratified groups and applications. \textit{Math. Z.}, 244 (2003), 309--335. 

\bibitem{FV}
Ferrari F. and Valdinoci E.,
\newblock Some weighted Poincar\'e inequalities. \textit{Indiana Univ. Math. J.}, 58(4) (2009), 1619--1637. 



\bibitem{FLW}
Franchi B.,  Lu G., and  Wheeden R.L.
\newblock Representation formulas and weighted Poincar\'e inequalities for H\"ormander vector fields. \textit{Annales de l'Institut Fourier}, 45(2) (1995), 577--604. 


\bibitem{Hor}
H\"ormander L.,
\newblock Hypoelliptic second order differential equations.
\newblock {\em  Acta Math.}, 119 (1967), 147--171.


\bibitem{KN}
Kuznetsov N. and Nazarov  A.,
\newblock Sharp constants in the Poincar\'e, Steklov and related inequalities (a survey).
\newblock {\em  Mathematika}, 61 (2) (2015), 328--344.

\bibitem{Steklov}
Steklov V. A.,
\newblock On the expansion of a given function into a series of harmonic functions.
\newblock {\em  Commun. Kharkov Math. Soc. Ser. 2}, 5 (1896), 60--73 (in Russian).





\bibitem{OS}
Ozawa T. and Sasaki H.,
\newblock Inequalities associated with dilations. \textit{Commun. Contemp. Math.}, 11(2) (2009), 265--277. 

\bibitem{OS19a}
Ozawa T. and Suragan D.,
\newblock Sharp remainder of the Poincar\'e inequality.
\newblock {\em to appear in Proc. Amer. Math. Soc.}, 2020.

\bibitem{OS19b}
Ozawa T. and Suragan D.,
\newblock Poincar\'e inequalities with exact missing terms on homogeneous groups.
\newblock {\em to appear in JMSJ}, 2020.

	\bibitem{ReedSimon}
Reed M.   and  Simon B.,
\newblock Methods of Modern Mathematical Physics.
\newblock \textit{IV: Analysis of operators.} Academic Press, 1978.


\bibitem{Ruzhansky-Suragan:JDE}
Ruzhansky M. and Suragan D.,
On horizontal Hardy, Rellich, Caffarelli-Kohn-Nirenberg and $p$-sub-Laplacian inequalities on stratified groups.
{\em J. Differential Equations}, 262 (2017), 1799--1821.


	\bibitem{RS_book}
	Ruzhansky M. and Suragan D.,
	\newblock Hardy Inequalities on Homogeneous Groups.
	\newblock \textit{Progress in Math.} Vol. 327, Birkh\"auser, 588 pp, 2019.
	


\end{thebibliography}
\end{document}